 \newcommand{\mc}{\mathcal}
 \newcommand{\A}{\mc A}
 \newcommand{\B}{\mc B}
 \newcommand{\F}{\mc F}
 \newcommand{\G}{\mc G}
 \newcommand{\C}{\mathbb{C}}
 \newcommand{\R}{\mathbb{R}}
 \newcommand{\Z}{\mathbb{Z}}
 \newcommand{\ep}{\varepsilon}
 \newcommand{\bo}{\boldsymbol 0}
 \newcommand{\be}{\boldsymbol e}
 \newcommand{\bm}{\boldsymbol m}
 \newcommand{\bn}{\boldsymbol n}
 \newcommand{\bt}{\boldsymbol t}
 \newcommand{\bx}{\boldsymbol x}
 \newcommand{\bwy}{\boldsymbol y}
 \newcommand{\bz}{\boldsymbol z}
 \newcommand{\wF}{\widehat{F}}
 \newcommand{\wsF}{\widehat{\mc F}}
 \newcommand{\wsG}{\widehat{\mc G}}
 \newcommand{\h}{\frac12}
 \newcommand{\hh}{\tfrac12}
 \newcommand{\hhh}{\frac32}
 \newcommand{\dex}{\frac{\rm d}{{\rm d} x}}
 \newcommand{\dt}{\text{\rm d}t}
 \newcommand{\dw}{\text{\rm d}w}
 \newcommand{\dx}{\text{\rm d}x}
 \newcommand{\dbx}{\text{\rm d}\bx}
 \newcommand{\dby}{\text{\rm d}\bwy}
\DeclareMathOperator{\rank}{rank}
\DeclareMathOperator{\sgn}{sgn}
\title{%
    On the number of lattice points in a ball
    }
\author{%
    Jeffrey D. Vaaler
    }
\abstract{%
    Let $\A \subseteq \R^M$ be a discrete subgroup of rank $N$, so that $\A$ is the $\Z$-module generated by the columns of an
$M \times N$ real matrix $A$ of rank $N$.  Let $\B \subseteq \R^M$ be the $\R$-linear subspace spanned by the columns of $A$ and let $|A|$ 
denote the norm of the matrix $A$ as a linear map from $\R^N$ into $\R^M$.  We prove an explicit inequality that estimates the number of 
points in $\A$ contained in a ball of radius $R$ centered at a generic point in $\B$.  The inequality we prove is uniform over all matrices $A$ with
norm bounded by a positive constant.  A particularly simple form of the inequality occurs when $N = 3$.
    }
\keywords{%
    Extremal functions, Lattice points
    }
\begin{document}



\section{Introduction}

Let $A$ be an $M\times N$ matrix with real entries and $1 \le N = \rank A \le M$. The columns of $A$ generate the free $\Z$-module
\begin{equation*}\label{intro31}
\A = \big\{A\bm : \bm\in\Z^N \big\} \subseteq \R^M
\end{equation*}
of rank $N$. The columns of $A$ also generate the $\R$-linear subspace
\begin{equation*}\label{intro38}
\B = \big\{A\bx : \bx \in \R^N \big\} \subseteq \R^M
\end{equation*}
of dimension $N$. We consider the problem of estimating the number of points in $\A$ that are contained in a~ball of positive radius $R$ centered
at a~generic point $A \bx$ in the subspace $\B$.

The general problem of estimating the number of lattice points in a~ball is treated in~\cite{kratzel1988} and~\cite{walfisz1957}. Results for lattice points
in $\R^3$ are proved in~\cite{rogerHB1999} and~\cite{tsang2000}. The papers~\cite{gotze2004},~\cite{mazo1990},~\cite{skriganov1998} and~\cite{skriganov2005} treat questions somewhat closer to the present work.

We write
\begin{equation*}\label{intro40}
|\bx| = \big(x_1^2 + x_2^2 + \dots + x_N^2 \big)^{\h}
\end{equation*}
for the Euclidean norm of a~(column) vector $\bx$ in $\R^N$, and similarly for a~vector in $\R^M$. We write
\begin{equation}\label{intro44}
V_N = \frac{\pi^{N/2}}{\Gamma(N/2 + 1)},\quad\text{and}\quad \omega_{N-1} = \frac{2\pi^{N/2}}{\Gamma(N/2)},
\end{equation}
for the volume and surface area of the unit ball, respectively, in $\R^N$. We define the normalized characteristic function
\begin{equation*}\label{intro47}
\chi_R : \R^M \rightarrow \big\{0, \hh, 1\big\}
\end{equation*}
of a~ball of radius $R$ centered at $\bo$ by
\begin{equation}\label{intro50}
\chi_R(\bwy) =
\begin{cases}
1& \text{if $|\bwy| < R$},\\
				 \h & \text{if $|\bwy| = R$},\\
					 0& \text{if $R < |\bwy|$}.
\end{cases}
\end{equation}
We suppress reference to the dimension $M$ (or to the dimension $N$) in our notation for $\chi_R$ as this should always be clear.
And we write
\begin{equation*}\label{intro54}
|A| = \sup\big\{|A\bx|: \bx\in\R^N,\ |\bx|\le 1\big\}
\end{equation*}
for the norm of the linear transformation
\begin{equation*}\label{intro61}
\bx \mapsto A\bx
\end{equation*}
from $\R^N$ into $\R^M$. Then a~more precise statement of the problem we consider is to estimate the sum
\begin{equation}\label{intro68}
\bigl(\det A^T A\bigr)^{\h} \sum_{\bm\in\Z^N} \chi_R\bigl(A(\bm + \bx)\bigr)
\end{equation}
by an expression that is independent of the point $\bx$ in $\R^N$. We seek an estimate for (\ref{intro68}) that depends
on $N$, $R$, and on an upper bound for the norm $|A|$. As
\begin{equation*}\label{intro80}
\bigl(\det A^T A\bigr)^{\h} \int_{\R^N} \chi_R\bigl(A(\bwy + \bx)\bigr)\ \dby = V_N R^N,
\end{equation*}
it is natural to estimate (\ref{intro68}) by establishing an upper bound for
\begin{equation}\label{intro87}
\biggl|\bigl(\det A^T A\bigr)^{\h} \sum_{\bm\in\Z^N} \chi_R\bigl(A(\bm + \bx)\bigr) - V_N R^N \biggr|
\end{equation}
that depends on $N$, $R$, and on an upper bound for $|A|$.

An estimate for (\ref{intro87}) follows from inequalities for extremal functions that were obtained in~\cite{HV}. The Bessel functions $J_{\nu}(x)$ and
$J_{\nu + 1}(x)$ with $2\nu + 2 = N$ naturally occur in our estimates. However, as a~convenient abuse of notation, we use both $N$ and $\nu$
depending on the situation. Our first result on the number of lattice points in a~ball is the following inequality.

\begin{theorem}\label{thmintro1} Let $A$ be an $M\times N$ matrix with real entries, and
\begin{equation*}\label{intro91}
1 \le N = \rank A \le M.
\end{equation*}
Let $-1 < \nu$ and $0 < \delta$ be real parameters such that $2\nu + 2 = N$. Then for $0 < R$ and $|A| \le \delta^{-1}$
we have
\begin{equation*}\label{intro93}
\biggl|\bigl(\det A^T A\bigr)^{1/2}\sum_{\bm\in\Z^N} \chi_R\bigl(A(\bm + \bx)\bigr) - V_N R^N \biggr| \le \omega_{N-1}u_{\nu}(R,\delta)
\end{equation*}
for all $\bx$ in $\R^N$, where $u_{\nu}(R, \delta)$ is positive and defined by
\begin{equation}\label{intro98}
u_{\nu}(R, \delta) = \delta^{-1} R^{N - 1} \bigg(1 - \tfrac{\pi}{2} (N - 1) \int_{\pi \delta R}^{\infty} x^{-1} J_{\nu}(x) J_{\nu + 1}(x)\ \dx\bigg)^{-1}.
\end{equation}
\end{theorem}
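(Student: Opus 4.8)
The plan is to realize the normalized counting function as a point-evaluation of a periodic distribution built from a radial extremal minorant/majorant for $\chi_R$, and then apply Poisson summation. First I would invoke the extremal functions from \cite{HV}: for the characteristic function $\chi_R$ of a ball in $\R^M$ there exist entire functions $f_{\delta}^{-}$ and $f_{\delta}^{+}$ of exponential type (with type controlled by $\delta$) satisfying $f_{\delta}^{-}(\bwy) \le \chi_R(\bwy) \le f_{\delta}^{+}(\bwy)$ for all $\bwy$, whose Fourier transforms are supported in the ball of radius $\delta^{-1}$ (in the appropriate dual normalization), and whose $L^1$-error $\int (f_{\delta}^{+} - \chi_R)$ and $\int (\chi_R - f_{\delta}^{-})$ is exactly the quantity that will produce the Bessel-function term in \eqref{intro98}. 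The radial symmetry of $\chi_R$ lets one take these extremal functions radial as well, so that their radial profiles are expressible via the Bessel functions $J_{\nu}$ and $J_{\nu+1}$ with $2\nu+2=N$.

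Next I would push these one-dimensional-in-disguise bounds through the lattice sum. Writing $g^{\pm}(\bm) = f_{\delta}^{\pm}(A(\bm+\bx))$, monotonicity gives
\begin{equation*}
\bigl(\det A^T A\bigr)^{1/2}\sum_{\bm\in\Z^N} f_{\delta}^{-}(A(\bm+\bx)) \le \bigl(\det A^T A\bigr)^{1/2}\sum_{\bm\in\Z^N} \chi_R(A(\bm+\bx)) \le \bigl(\det A^T A\bigr)^{1/2}\sum_{\bm\in\Z^N} f_{\delta}^{+}(A(\bm+\bx)).
\end{equation*}
Now apply the Poisson summation formula to each extremal sum. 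The Fourier transform of $\bx \mapsto f_{\delta}^{\pm}(A\bx)$ is, up to the Jacobian factor $\bigl(\det A^T A\bigr)^{-1/2}$, the transform of $f_{\delta}^{\pm}$ restricted and reparametrized by $A$; because that transform is supported in a ball of radius $\delta^{-1}$ and $|A| \le \delta^{-1}$ forces $|A^T \bn| \le \delta^{-1}|\bn|$, the only dual lattice point $\bn \in \Z^N$ landing in the support is $\bn = \bo$ (here one must check the inequality is strict for $\bn \ne \bo$, which is where the precise relation between the type of the extremal function and $\delta$ enters). Hence every Poisson sum collapses to its zero-frequency term:
\begin{equation*}
\bigl(\det A^T A\bigr)^{1/2}\sum_{\bm\in\Z^N} f_{\delta}^{\pm}(A(\bm+\bx)) = \widehat{f_{\delta}^{\pm}}(\bo) = \int_{\R^M} f_{\delta}^{\pm},
\end{equation*}
independent of $\bx$.

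Finally I would assemble the estimate. Since $\int f_{\delta}^{+} = V_N R^N + \bigl(\int f_{\delta}^{+} - \int \chi_R\bigr)$ and similarly for $f_{\delta}^{-}$, the displayed inequalities sandwich the counting sum between $V_N R^N$ plus or minus the extremal $L^1$-errors, and it remains to identify these errors with $\omega_{N-1}u_{\nu}(R,\delta)$. This is the computational heart: one must evaluate (or bound) the $L^1$-distance from the Beurling–Selberg-type extremal function to $\chi_R$ in terms of the tail integral $\int_{\pi\delta R}^{\infty} x^{-1}J_{\nu}(x)J_{\nu+1}(x)\,\dx$, using the explicit formulas in \cite{HV} for these extremal functions together with standard Bessel integral identities (e.g. $\frac{d}{dx}\bigl(x^{-\nu}J_{\nu+1}(x)\bigr)$-type relations and the Weber–Schafheitlin integrals) to get the factor $\tfrac{\pi}{2}(N-1)$ and the $\delta^{-1}R^{N-1}$ prefactor, and finally checking that the parenthetical factor in \eqref{intro98} is positive so that $u_{\nu}(R,\delta)$ is well-defined.

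The main obstacle I anticipate is this last identification: matching the \cite{HV} extremal-function error exactly to the closed form in \eqref{intro98}, including verifying positivity of $1 - \tfrac{\pi}{2}(N-1)\int_{\pi\delta R}^{\infty} x^{-1}J_{\nu}(x)J_{\nu+1}(x)\,\dx$ and that the majorant and minorant errors coincide (so a single $u_\nu(R,\delta)$ works on both sides). The Poisson-summation step is routine once the support condition $|A|\le\delta^{-1}$ is in hand, but care is needed at the boundary case $|A|=\delta^{-1}$ and at values of $\bwy$ on the sphere $|\bwy|=R$ (where $\chi_R=\tfrac12$), which is precisely why the normalized $\chi_R$ in \eqref{intro50} is the right object to feed into the extremal-function machinery.
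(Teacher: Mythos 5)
Your high-level strategy --- sandwich $\chi_R$ between the Holt--Vaaler extremal functions, apply Poisson summation, observe that under $|A| \le \delta^{-1}$ the dual sum collapses to the zero-frequency term, and identify the resulting error with $\omega_{N-1}u_{\nu}(R,\delta)$ --- is exactly the paper's approach. But there is a real gap in your Poisson step. You try to apply Poisson summation directly to $\bx \mapsto f_{\delta}^{\pm}(A\bx)$ with $A$ a rectangular $M \times N$ matrix, and your support condition ``$|A^T\bn| \le \delta^{-1}|\bn|$'' is both in the wrong direction and not the relevant quantity. The paper resolves this with Lemma~\ref{lemappl1}: there is a square $N\times N$ symmetric positive-definite matrix $S$ with $|A\bx| = |S\bx|$ for all $\bx$, so $\chi_R\bigl(A(\bm+\bx)\bigr) = \chi_R\bigl(S(\bm+\bx)\bigr)$ and $(\det A^T A)^{1/2} = \det S$, which makes the lattice sum a bona fide periodic sum on $\R^N$ to which the classical Poisson formula applies. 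That same lemma supplies the needed \emph{lower} bound $|S^{-1}\bn| \ge |A|^{-1} \ge \delta$ for every $\bn \in \Z^N$ with $\bn \ne \bo$. Together with Stein's several-variable Paley--Wiener theorem --- which forces the Fourier transforms of $\F_{\nu}$ and $\G_{\nu}$ (both of exponential type $2\pi\delta$) to vanish on $\{|\bt| \ge \delta\}$, not on $\{|\bt| \ge \delta^{-1}\}$ as you wrote --- this kills every dual term except $\bn = \bo$. Without this reduction, the Fourier transform on $\R^N$ of a radial function on $\R^M$ composed with $A$ is not simply a rescaled restriction of the $M$-dimensional transform, and your Poisson identity as stated does not follow.

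Your remaining worries are already handled in the paper before the proof. The identity $\omega_{N-1}u_{\nu}(R,\delta) = \int_{\R^N}\bigl(\G_{\nu}(\bx;R,\delta) - \F_{\nu}(\bx;R,\delta)\bigr)\,\dbx$ is quoted from~\cite{HV}; the closed form~\eqref{intro98} is derived in section~3 from the Bessel evaluation~\eqref{deB419} of~\cite{HV}, the limit~\eqref{deB423}, the derivative identity~\eqref{deB430}, and the scaling relation (ii); and positivity of the parenthetical factor is automatic since $u_{\nu}(R,\delta) > 0$ is proved in~\cite{HV}. Finally, you do not need the majorant and minorant errors to coincide: the paper uses only their sum $\wsG_{\nu}(\bo;R,\delta) - \wsF_{\nu}(\bo;R,\delta) = \omega_{N-1}u_{\nu}(R,\delta)$ together with the sandwich $\wsF_{\nu}(\bo;R,\delta) \le V_N R^N \le \wsG_{\nu}(\bo;R,\delta)$, which already yields a bound of $\omega_{N-1}u_{\nu}(R,\delta)$ on each side.
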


In dimension $N = 3$ we get the following simpler bound.

\begin{corollary}\label{corintro1} Let $A$ be an $M\times 3$ matrix with real entries, $\rank A = 3$, and let $0 < \delta$.
Then for $0 < R$ and $|A| \le \delta^{-1}$, we have
\begin{equation}\label{intro105}
\begin{split}
\biggl|\bigl(\det A^T A\bigr)^{1/2}\sum_{\bm\in\Z^3} \chi_R\bigl(A(\bm&+ \bx)\bigr) - \tfrac{4}{3} \pi R^3 \biggr|\\
	&\le 4 \pi \delta^{-1} R^2 \bigg(1 - \biggl(\frac{\sin \pi \delta R}{\pi \delta R}\biggr)^2 \bigg)^{-1}
\end{split}
\end{equation}
at each point $\bx$ in $\R^3$.
\end{corollary}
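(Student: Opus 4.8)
The plan is to specialize Theorem~\ref{thmintro1} to the case $N = 3$ and then evaluate the function $u_{\nu}(R,\delta)$ in closed form. With $N = 3$ the constraint $2\nu + 2 = N$ forces $\nu = \tfrac12$, and from \eqref{intro44} one checks directly that $V_3 = \pi^{3/2}/\Gamma(5/2) = \tfrac43\pi$ and $\omega_2 = 2\pi^{3/2}/\Gamma(3/2) = 4\pi$, which already accounts for the left-hand side of \eqref{intro105} and for the leading factor $4\pi\delta^{-1}R^2$ on the right. So the whole corollary reduces to verifying that
\[
\frac{\pi}{2}(N-1)\int_{\pi\delta R}^{\infty} x^{-1} J_{1/2}(x) J_{3/2}(x)\ \dx = \biggl(\frac{\sin\pi\delta R}{\pi\delta R}\biggr)^{2}.
\]

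Next I would insert the elementary half-integer formulas $J_{1/2}(x) = (2/\pi x)^{1/2}\sin x$ and $J_{3/2}(x) = (2/\pi x)^{1/2}\bigl(x^{-1}\sin x - \cos x\bigr)$, which give
\[
x^{-1} J_{1/2}(x) J_{3/2}(x) = \frac{2}{\pi}\biggl(\frac{\sin^{2}x}{x^{3}} - \frac{\sin x\cos x}{x^{2}}\biggr) = -\frac{1}{\pi}\,\dex\biggl(\frac{\sin^{2}x}{x^{2}}\biggr).
\]
Since $N - 1 = 2$, the prefactor $\tfrac{\pi}{2}(N-1)$ equals $\pi$, and the integral telescopes: the antiderivative $-\pi^{-1}(\sin x/x)^{2}$ tends to $0$ as $x\to\infty$, so the integral equals $\pi^{-1}(\sin\pi\delta R/\pi\delta R)^{2}$, and multiplication by $\pi$ yields the displayed identity. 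Substituting into \eqref{intro98} then gives $u_{1/2}(R,\delta) = \delta^{-1}R^{2}\bigl(1 - (\sin\pi\delta R/\pi\delta R)^{2}\bigr)^{-1}$, and $\omega_2\,u_{1/2}(R,\delta)$ is precisely the right-hand side of \eqref{intro105}.

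I do not anticipate any real obstacle here; the one point worth recording is that the bound is finite and positive, since $|\sin t| < |t|$ for every $t\neq 0$ and hence $1 - (\sin\pi\delta R/\pi\delta R)^{2} > 0$ whenever $\delta R > 0$. This matches the assertion in Theorem~\ref{thmintro1} that $u_{\nu}(R,\delta)$ is positive, so the corollary follows immediately from the theorem together with the elementary computations above.
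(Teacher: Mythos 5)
Your proof is correct and follows essentially the same route as the paper: reduce to the identity $\pi x^{-1}J_{1/2}(x)J_{3/2}(x) = -\frac{d}{dx}\bigl(\sin x/x\bigr)^2$, integrate from $\pi\delta R$ to $\infty$, and substitute into \eqref{intro98} together with $V_3 = \tfrac43\pi$ and $\omega_2 = 4\pi$. You merely spell out the half-integer Bessel formulas that the paper invokes implicitly, which is a helpful addition but not a different argument.
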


\begin{proof}
Bessel functions have elementary representations when the index $\nu$ is half of an odd
integer. If $N = 3$ then $\nu = \h$, and we find that
\begin{equation*}\label{intro112}
\pi x^{-1} J_{\h}(x) J_{\hhh}(x) = - \frac{d}{d x} \biggl(\frac{\sin x}{x}\biggr)^2.
\end{equation*}
The integral on the right of (\ref{intro98}) is then
\begin{equation*}
\pi \int_{\pi \delta R}^{\infty} x^{-1} J_{\h}(x) J_{\hhh}(x)\ \dx = \biggl(\frac{\sin \pi \delta R}{\pi \delta R}\biggr)^2.
\end{equation*}
Therefore (\ref{intro105}) follows from (\ref{intro98}).
\end{proof}

In section 2 we prove results that we need from the theory of entire functions of exponential type in one and several variables. We say that
an entire function $F : \C^N \rightarrow \C$ of $N$ complex variables is a~\textit{real entire} function if the restriction of $F$ to $\R^N$ takes real values.
Such functions are used throughout the paper. Section 3 contains an account of the Beurling-Selberg extremal problem for a~ball and the solution that
was found in~\cite{HV}. In particular, this includes the initial identification of $u_{\nu}(R, \delta)$ as the solution to an extremal problem.
In section 4 we establish a~special case of the Poisson summation formula (essentially~\cite[\S VII, Theorem 2.4]{stein1971}) that applies to
integrable functions $F : \R^N \rightarrow \R$ which are the restriction to $\R^N$ of a~real entire function $F : \C^N \rightarrow \C$ of exponential
type. The proof of Theorem~\ref{thmintro1} is given in section 5.

\section{Entire functions of exponential type}

We recall that an entire function $F : \C \rightarrow \C$ has \textit{exponential type} if $F$ is not identically zero, and if
\begin{equation*}\label{boas33}
\limsup_{|z| \rightarrow \infty} |z|^{-1} \log |F(z)| = \tau(F) < \infty.
\end{equation*}
If $F$ has exponential type, then the nonnegative number $\tau(F)$ is the \textit{exponential type} of $F$. We write $e(z) = e^{2\pi i z}$ for a~complex number $z$.

\begin{lemma}\label{lemboas1} Let $0 < \delta$ and let $F : \C \rightarrow \C$ be an entire function of exponential type at most $2 \pi \delta$.
Assume that
\begin{equation*}\label{boas40}
\|F\|_1 = \int_{\R} |F(x)|\ \dx < \infty,
\end{equation*}
and write $\wF : \R \rightarrow \C$ for the Fourier transform
\begin{equation}\label{boas47}
\wF(t) = \int_{\R} F(x) e(-t x)\ \dx.
\end{equation}
Then
\begin{equation}\label{boas54}
\|F\|_{\infty} = \sup \big\{|F(x)| : x \in \R\big\} \le 2 \pi \delta \|F\|_1,
\end{equation}
and
\begin{equation}\label{boas61}
\|F\|_2 = \biggl(\int_{\R} |F(x)|^2 \ \dx\biggr)^{\h}\le \bigl(2 \pi \delta\bigr)^{\h} \|F\|_1.
\end{equation}
Moreover, the Fourier transform \textrm{(\ref{boas47})} is a~continuous function supported on $[-\delta, \delta]$. And the entire function $F$ is
determined at each point $z$ in $\C$ by the Fourier inversion formula
\begin{equation}\label{boas63}
F(z) = \int_{-\delta}^{\delta} \wF(t) e(tz)\ \dt.
\end{equation}
\end{lemma}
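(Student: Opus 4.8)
The plan is to prove this as a consequence of the classical Paley–Wiener and Plancherel–Pólya theory for entire functions of exponential type, with the specific quantitative constants that come from the one-variable Bernstein-type inequalities.

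First I would treat the Fourier-transform assertions. Since $F$ is entire of exponential type at most $2\pi\delta$ and lies in $L^1(\R)$, the Paley–Wiener theorem for $L^1$ functions applies: $\wF$ is continuous (it is the Fourier transform of an integrable function, so continuity is immediate from dominated convergence) and $\wF$ is supported in $[-\delta,\delta]$. I would record the support statement by the standard contour-shift argument: for $|t| > \delta$ one translates the integration path in \eqref{boas47} into the upper or lower half plane (according to the sign of $t$), and the exponential type bound together with integrability forces the contour integral to vanish in the limit. With $\wF$ continuous and compactly supported, Fourier inversion gives \eqref{boas63} for real $z$; to extend it to all $z \in \C$, I would observe that the right-hand side of \eqref{boas63} is an entire function of $z$ (differentiation under the integral sign over the compact interval $[-\delta,\delta]$ is justified) which agrees with $F$ on $\R$, so the two entire functions coincide everywhere by the identity theorem.

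Next I would derive the two norm inequalities \eqref{boas54} and \eqref{boas61} from the inversion formula \eqref{boas63}. For the sup-norm bound, I first need the pointwise bound $\|\wF\|_\infty \le \|F\|_1$, which is immediate from \eqref{boas47}. Then from \eqref{boas63} with $z = x \in \R$,
\begin{equation*}
|F(x)| \le \int_{-\delta}^{\delta} |\wF(t)|\ \dt \le 2\delta\,\|\wF\|_\infty \le 2\delta\,\|F\|_1,
\end{equation*}
which is in fact slightly stronger than \eqref{boas54}; the stated constant $2\pi\delta$ follows a fortiori. For the $L^2$ bound, by Plancherel's theorem $\|F\|_2 = \|\wF\|_2$, and since $\wF$ is supported on $[-\delta,\delta]$,
\begin{equation*}
\|\wF\|_2^2 = \int_{-\delta}^{\delta} |\wF(t)|^2\ \dt \le 2\delta\,\|\wF\|_\infty^2 \le 2\delta\,\|F\|_1^2,
\end{equation*}
so $\|F\|_2 \le (2\delta)^{1/2}\|F\|_1 \le (2\pi\delta)^{1/2}\|F\|_1$, giving \eqref{boas61}.

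I expect the main technical obstacle to be the rigorous justification of the support statement for $\wF$, i.e. that an $L^1$ entire function of exponential type $2\pi\delta$ has Fourier transform vanishing outside $[-\delta,\delta]$. The delicate point is that membership in $L^1(\R)$ alone does not immediately give decay of $F(x+iy)$ as $|x|\to\infty$ for fixed $y\ne 0$, so the naive contour-shift needs a supporting argument — typically via a Phragmén–Lindelöf estimate showing $F$ is bounded on horizontal strips, or by first convolving with a fixed $L^1$ approximate identity band-limited to a small interval and passing to the limit. Once that is in place, everything else is routine: the inversion formula, its analytic continuation via the identity theorem, and the two norm estimates all follow by the elementary manipulations above.
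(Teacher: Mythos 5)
Your proposal reverses the logical order of the paper's argument, and this reversal is where the real difficulty hides. You want to establish first that $\wF$ is continuous and supported in $[-\delta,\delta]$ (the ``$L^1$ Paley--Wiener theorem''), and then read off both norm inequalities from the inversion formula. The paper does the opposite: it first proves the sup-norm bound \eqref{boas54} with no Fourier analysis at all, by introducing the antiderivative $E(z) = \int_0^z F(w)\,\dw$, observing that $E$ is entire of the same exponential type and bounded on $\R$ by $\|F\|_1$, and applying Bernstein's inequality to $E' = F$. The $L^2$ bound \eqref{boas61} then follows by interpolation, and only after $F \in L^1 \cap L^2$ is established does the paper invoke the Paley--Wiener theorem --- and it uses the \emph{$L^2$} version, which is the classical one with a clean statement and proof.

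The gap you flag as a ``technical obstacle'' is in fact the crux of the matter, and I think you underestimate it. The contour-shift argument for the support of $\wF$ requires control of $F(x+iy)$ as $|x|\to\infty$ for fixed $y\ne 0$, and to get this from Phragm\'en--Lindel\"of you first need $F$ to be bounded on $\R$ --- which is precisely the content of \eqref{boas54}, the thing you were trying to \emph{derive} from the support statement. So there is a circularity lurking: the most natural way to justify your step (i) is to first prove your step (iii). Your alternative suggestion of convolving with a bandlimited approximate identity would break the circle, but that is itself a nontrivial argument that you would need to carry out, and at that point your proof is no shorter than the paper's. The paper's Bernstein-first route is specifically designed to avoid this issue: the only analytic input needed is that the antiderivative of an $L^1$ function is bounded, and Bernstein's inequality then does the rest.

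One thing your approach does buy, if the gap were filled: the inversion formula directly yields $\|F\|_\infty \le 2\delta\,\|F\|_1$ and $\|F\|_2 \le (2\delta)^{1/2}\|F\|_1$, which are sharper by a factor of $\pi$ (respectively $\pi^{1/2}$) than the constants \eqref{boas54} and \eqref{boas61} that the Bernstein route produces. The paper does not need the sharper constants, so it opts for the shorter, self-contained argument.
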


\begin{proof}
Let $E : \C \rightarrow \C$ be the entire function defined by
\begin{equation*}\label{boas68}
E(z) = \int_0^z F(w)\ \dw.
\end{equation*}
Let $0 < \ep$ and let $C_{\ep}$ be a~positive constant that satisfies both of the inequalities
\begin{equation}\label{boas75}
|F(z)| \le C_{\ep} \exp\big((2 \pi \delta + \ep)|z|\big),\quad\text{and}\quad |z| \le C_{\ep} \exp\big(\ep |z|\big),
\end{equation}
at each point $z$ in $\C$. Using (\ref{boas75}) we find that
\begin{equation}\label{boas82}
\begin{split}
|E(z)| &= \biggl|z \int_0^1 F(t z)\ \dt\biggr| \le C_{\ep} |z| \int_0^1 \exp\big((2 \pi \delta + \ep) t |z|\big)\ \dt\\
	&\le C_{\ep}^2 \exp\big((2 \pi \delta + 2\ep) |z|\big)
\end{split}
\end{equation}
at each point $z$ in $\C$. As $0 < \ep$ is arbitrary it follows from (\ref{boas82}) that $E$ has exponential type at most $2 \pi \delta$.
At each point $x$ in $\R$ we also get the inequality
\begin{equation}\label{boas89}
|E(x)| = \biggl|\int_0^x F(w)\ \dw\biggr| \le \int_{\R} |F(w)|\ \dw = \|F\|_1,
\end{equation}
and therefore $E$ is bounded on $\R$. As $E^{\prime}(z) = F(z)$, Bernstein's inequality (see~\cite[\S 74]{achieser1956},~\cite{bernstein1923},
or~\cite{boas1954}) and (\ref{boas89}) imply that
\begin{equation*}\label{boas96}
\begin{split}
\sup \big\{|F(x)| : x \in \R\big\} &= \sup \big\{\bigl|E^{\prime}(x)\bigr| : x \in \R\big\}\\
	 &\le 2 \pi \delta \sup\big\{|E(x)| : x \in \R\big\}\\
	 &\le 2 \pi \delta \|F\|_1.
\end{split}
\end{equation*}
This proves (\ref{boas54}). Then
\begin{equation*}\label{boas103}
\begin{split}
\int_{\R} \bigl|F(x)\bigr|^2 \ \dx \le \sup \big\{|F(x)| : x \in \R\big\} \int_{\R} |F(x)|\ \dx \le 2 \pi \delta \|F\|_1^2
\end{split}
\end{equation*}
verifies the inequality (\ref{boas61}).

Because $F$ belongs to $L^1 (\R)$, it follows that the Fourier transform (\ref{boas47}) is a~function in $C_0(\R)$. Because $F$ belongs to
$L^2 (\R)$ and has exponential type at most $2 \pi \delta$, it follows from the Paley-Wiener theorem (see~\cite[\S 72]{achieser1956} or~\cite[\S 3.4]{stein1971}) that $t \mapsto \wF(t)$ is supported on the interval $[- \delta, \delta]$. Therefore the Fourier inversion formula asserts that
\begin{equation}\label{extreme5}
F(x) = \int_{-\delta}^{\delta} \wF(t) e(xt)\ \dt
\end{equation}
at each point $x$ in $\R$. Using the integral on the right of (\ref{extreme5}) and Morera's theorem it is easy to show that
\begin{equation}\label{extreme10}
z \mapsto \int_{-\delta}^{\delta} \wF(t) e(z t)\ \dt
\end{equation}
defines an entire function of $z = x + iy$. Plainly the entire function defined by (\ref{extreme10}) is equal to the entire function
$z \mapsto F(z)$ for $z$ in $\R$. Therefore we get
\begin{equation*}\label{extreme15}
F(z) = \int_{-\delta}^{\delta} \wF(t) e(z t)\ \dt
\end{equation*}
at each point $z$ in $\C$ by analytic continuation. This verifies (\ref{boas63}).
\end{proof}

We write $\bz = (z_n)$ for a~(column) vector in $\C^N$ and
\begin{equation*}\label{intro181}
|\bz| = \bigl(|z_1|^2 + |z_2|^2 + \cdots + |z_N|^2 \bigr)^{\h}
\end{equation*}
for the usual Hermitian norm of $\bz$. We define a~second norm
\begin{equation*}\label{extreme53}
\|\ \| : \C^N \rightarrow [0, \infty)
\end{equation*}
on vectors $\bz$ in $\C^N$ by setting
\begin{equation*}\label{extreme57}
\|\bz\| = \sup\Big\{\bigl|z_1 t_1 + z_2 t_2 + \cdots + z_N t_N\bigr| : \text{$\bt \in \R^N$ and $|\bt| \le 1$}\Big\}.
\end{equation*}
If $N = 1$ then $|z| = \|z\|$ at each point $z$ in $\C$. But if $2 \le N$ we find that
\begin{equation}\label{extreme60}
\|\bz\|^2 \le |\bz|^2 \le 2 \|\bz\|^2
\end{equation}
at each point $\bz$ in $\C^N$, where both inequalities in (\ref{extreme60}) are sharp. If
\begin{equation*}\label{extreme62}
F : \C^N \rightarrow \C
\end{equation*}
is an entire function of $N$ complex variables and not identically zero, we say that $F$ has \textit{exponential type} if
\begin{equation*}\label{extreme64}
\limsup_{\|\bz\| \rightarrow \infty} \|\bz\|^{-1} \log |F(\bz)| = \tau(F) < \infty.
\end{equation*}
The nonnegative number $\tau(F)$ is the \textit{exponential type} of $F$. If $N = 1$ this is the usual definition of exponential type discussed above.
If $2 \le N$ our definition is a~special case of a~more general notion of exponential type introduced by Stein in~\cite{stein1957}, (see also~\cite[pp. 111-112]{stein1971}).

Next we define a~map that sends an even entire function $F : \C \rightarrow \C$ having exponential type into a~radial entire function
\begin{equation*}\label{boas110}
\psi_N(F) : \C^N \rightarrow \C
\end{equation*}
having exponential type in the sense of Stein~\cite{stein1957}. This map was defined and used in~\cite[section 6]{HV}.

Let $F : \C \rightarrow \C$ be an even entire function. Then the power series for $F$ at $0$ can be written as
\begin{equation}\label{extreme30}
F(z) = \sum_{m = 0}^{\infty} c_{2m}(F) z^{2m},
\end{equation}
where
\begin{equation*}\label{extreme35}
c_{2m}(F) = \frac{F^{(2m)}(0)}{(2m)!}\quad\text{for $m = 0, 1, 2, \dots $.}
\end{equation*}
As is well known, the partial sums for the series in (\ref{extreme30}) converge uniformly on compact subsets of $\C$. For each positive integer $N$ we define,
as in~\cite[Lemma 18]{HV}, the entire function
\begin{equation*}\label{extreme40}
\psi_N(F) : \C^N \rightarrow \C
\end{equation*}
of $N$ complex variables $z_1, z_2, \dots, z_N$, by
\begin{equation}\label{extreme45}
\psi_N(F)(\bz) = \sum_{m = 0}^{\infty} \frac{c_{2m}(F) \bigl(z_1^2 + z_2^2 + \cdots + z_N^2 \bigr)^m}{(2m)!}.
\end{equation}
If $\bx$ belongs to $\R^N$ we find that
\begin{equation}\label{extreme52}
\begin{split}
\psi_N(F)(\bx) &= \sum_{m = 0}^{\infty} \frac{c_{2m}(F) \bigl(x_1^2 + x_2^2 + \cdots + x_N^2 \bigr)^m}{(2m)!}\\
                       &= \sum_{m = 0}^{\infty} \frac{c_{2m}(F) |\bx|^{2m}}{(2m)!}\\
                       &= F\bigl(|\bx|\bigr).
\end{split}
\end{equation}
It follows that $\psi_N(F)$ restricted to $\R^N$ is a~radial function.

\begin{lemma}\label{lemboas2} Let $F : \C \rightarrow \C$ be an even entire function, and for each positive integer $N$ let
\begin{equation*}\label{boas131}
\psi_N(F) : \C^N \rightarrow \C
\end{equation*}
be the entire function of $N$ complex variables defined by \textrm{(\ref{extreme45})}. Define
\begin{equation*}\label{boas145}
\|F\|_{\infty} = \sup\big\{|F(x)| : x \in \R\big\},
\end{equation*}
and
\begin{equation*}\label{boas152}
\big\|\psi_N(F)\big\|_{\infty} = \sup\big\{\bigl|\psi_N(F)(\bx)\bigr| : \bx \in \R^N \big\}.
\end{equation*}
Then $F$ has exponential type if and only if $\psi_N(F)$ has exponential type. Moreover, if either $F$ or $\psi_N(F)$ has exponential type, then
\begin{equation}\label{boas159}
\tau(F) = \tau\bigl(\psi_N(F)\bigr).
\end{equation}
Also, $F$ restricted to $\R$ is bounded if and only if $\psi_N(F)$ restricted to $\R^N$ is bounded. Additionally, if either $F$ is bounded on $\R$ or
$\psi_N(F)$ is bounded on $\R^N$, then
\begin{equation}\label{boas166}
\|F\|_{\infty} = \big\|\psi_N(F)\big\|_{\infty}.
\end{equation}
\end{lemma}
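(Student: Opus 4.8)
The plan is to base everything on the identity
\[
\psi_N(F)(\bz) = F(\zeta),
\]
valid for every $\bz\in\C^N$ and any $\zeta\in\C$ with $\zeta^2 = z_1^2 + z_2^2 + \cdots + z_N^2$; this is just the analytic continuation of (\ref{extreme52}). Indeed, since $F$ is even we may write $F(z) = G(z^2)$ with $G$ entire, and then $\bz\mapsto G(z_1^2+\cdots+z_N^2)$ is an entire function on $\C^N$ that agrees with $\psi_N(F)$ on $\R^N$ (both equal $F(|\bx|) = G(|\bx|^2)$ there, by (\ref{extreme52})), hence agrees everywhere; the choice of square root $\zeta$ is irrelevant because $F$ is even. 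If $F\equiv0$ then $\psi_N(F)\equiv0$ and every assertion is trivial, so assume $F\not\equiv0$. Taking $\bz = (z,0,\dots,0)$ gives $\psi_N(F)(z,0,\dots,0) = F(z)$ and $\|(z,0,\dots,0)\| = |z|$; in particular $\psi_N(F)\not\equiv0$, and restricting the $\limsup$ defining $\tau\bigl(\psi_N(F)\bigr)$ to this complex line yields at once $\tau(F)\le\tau\bigl(\psi_N(F)\bigr)$ whenever the right-hand side is finite. The real-locus claims are equally quick: (\ref{extreme52}) gives $\psi_N(F)(\bx) = F(|\bx|)$, and evenness of $F$ gives $\sup\{|F(x)|:x\in\R\} = \sup\{|F(r)|:r\ge0\} = \sup\{|F(|\bx|)|:\bx\in\R^N\}$, so the two suprema are finite together and (\ref{boas166}) holds.

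It remains to prove the reverse inequality $\tau\bigl(\psi_N(F)\bigr)\le\tau(F)$ (when the right side is finite), and the crux is the geometric estimate
\[
|z_1^2 + z_2^2 + \cdots + z_N^2| \le \|\bz\|^2 \qquad (\bz\in\C^N).
\]
I would prove it by a rotation: choose $\theta\in\R$ with $e^{2i\theta}(z_1^2+\cdots+z_N^2) = |z_1^2+\cdots+z_N^2|\ge0$ and put $w_n = e^{i\theta}z_n$, so that $\|\bw\| = \|\bz\|$ (multiplying all $z_n$ by a unimodular constant does not change $|z_1t_1+\cdots+z_Nt_N|$) and $\sum_n w_n^2\ge0$ is real. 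Writing $w_n = p_n + iq_n$ with $p_n,q_n\in\R$ and $\bp = (p_n)$, $\bq = (q_n)$, the reality of $\sum_n w_n^2$ forces $\sum_n w_n^2 = |\bp|^2 - |\bq|^2 \le |\bp|^2$; and if $\bp\ne\bo$ then taking $\bt = \bp/|\bp|$ in the definition of $\|\bw\|$ gives $|\bp| = \sum_n p_n t_n = \Re\bigl(\sum_n w_n t_n\bigr)\le\bigl|\sum_n w_n t_n\bigr|\le\|\bw\|$, while the case $\bp=\bo$ is trivial. (Alternatively, $\|\bz\|^2$ is the largest eigenvalue and $|z_1^2+\cdots+z_N^2|$ the spread $\lambda_{\max}-\lambda_{\min}$ of the positive semidefinite matrix $\bu\bu^{T}+\bv\bv^{T}$ with $\bu = \Re\bz$, $\bv = \Im\bz$, so the estimate reduces to $\lambda_{\min}\ge0$.)

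Granting the estimate, fix $\ep>0$ and, exactly as in the proof of Lemma~\ref{lemboas1}, choose $C_\ep$ with $|F(\zeta)|\le C_\ep\exp\bigl((\tau(F)+\ep)|\zeta|\bigr)$ for all $\zeta\in\C$. For $\bz\in\C^N$ pick $\zeta$ with $\zeta^2 = z_1^2+\cdots+z_N^2$, so $|\zeta| = |z_1^2+\cdots+z_N^2|^{1/2}\le\|\bz\|$, and the identity of the first paragraph gives
\[
\bigl|\psi_N(F)(\bz)\bigr| = |F(\zeta)| \le C_\ep\exp\bigl((\tau(F)+\ep)\|\bz\|\bigr).
\]
Letting $\|\bz\|\to\infty$ yields $\tau\bigl(\psi_N(F)\bigr)\le\tau(F)+\ep$, and $\ep\to0$ yields $\tau\bigl(\psi_N(F)\bigr)\le\tau(F)$. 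Combined with $\tau(F)\le\tau\bigl(\psi_N(F)\bigr)$ this shows that $F$ has exponential type if and only if $\psi_N(F)$ does, and then (\ref{boas159}). The only genuine obstacle is the inequality $|z_1^2+\cdots+z_N^2|\le\|\bz\|^2$; everything else is bookkeeping with the definition of exponential type.
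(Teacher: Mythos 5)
Your proof is correct, and it differs from the paper's in a substantive way: for the exponential-type claims (the equivalence and the equality $\tau(F)=\tau(\psi_N(F))$), the paper simply cites~\cite[Lemma 18]{HV} and offers no argument, whereas you prove them from scratch. Your route hinges on two clean observations. First, since $F$ is even one may write $F(z)=G(z^2)$ with $G$ entire, so $\psi_N(F)(\bz)=G(z_1^2+\cdots+z_N^2)=F(\zeta)$ for any $\zeta$ with $\zeta^2=z_1^2+\cdots+z_N^2$; this extends the real identity (\ref{extreme52}) to all of $\C^N$ and also exposes a harmless typographical slip in (\ref{extreme45})--(\ref{extreme52}) (the stray $(2m)!$ in the denominator cannot be there if the last line $F(|\bx|)$ is to hold, and it plays no role in anything you do). Second, the estimate $|z_1^2+\cdots+z_N^2|\le\|\bz\|^2$, which you prove by a rotation argument normalizing $\sum_n z_n^2\ge 0$ and then using the Cauchy--Schwarz-type inequality $|\bp|\le\|\bw\|$ together with $\Re\sum_n w_n^2=|\bp|^2-|\bq|^2$; this is exactly what is needed to push the growth bound $|F(\zeta)|\le C_\ep e^{(\tau(F)+\ep)|\zeta|}$ through to $|\psi_N(F)(\bz)|\le C_\ep e^{(\tau(F)+\ep)\|\bz\|}$, giving $\tau(\psi_N(F))\le\tau(F)$. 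The converse $\tau(F)\le\tau(\psi_N(F))$ by restriction to the line $\bz=(z,0,\dots,0)$, and the boundedness statements from $\psi_N(F)(\bx)=F(|\bx|)$ and evenness, match the paper. One cosmetic caveat: your parenthetical ``alternative'' proof of the geometric estimate, via eigenvalue spread of $\bu\bu^T+\bv\bv^T$, is stated loosely --- for $N\ge 3$ the matrix has $N-2$ additional zero eigenvalues, so $\lambda_{\max}-\lambda_{\min}$ of the full matrix is not $|\sum z_n^2|$; you mean the two nonzero (or top two) eigenvalues. The main rotation argument is correct, so nothing is at stake. In sum: your argument supplies, in self-contained form, precisely the content the paper delegates to~\cite[Lemma 18]{HV}, at the cost of proving the norm inequality $|z_1^2+\cdots+z_N^2|\le\|\bz\|^2$.
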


\begin{proof}
That $F$ has exponential type if and only if $\psi_N(F)$ has exponential type, together with the identity (\ref{boas159}), were both
established in~\cite[Lemma 18]{HV}.

Because $F$ is an even function, it follows from the identity (\ref{extreme52}) that
\begin{equation*}\label{boas173}
\big\{F(x) : x \in \R\big\} = \big\{F\bigl(|\bx|\bigr) : \bx \in \R^N \big\} = \big\{\psi_N(F)(\bx) : \bx \in \R^N \big\},
\end{equation*}
and therefore
\begin{equation}\label{boas180}
\big\{\big|F(x)\bigr| : x \in \R\big\} = \big\{\bigl|\psi_N(F)(\bx)\bigr| : \bx \in \R^N \big\}.
\end{equation}
The conclusion that $F$ is bounded on $\R$ if and only if $\psi_N(F)$ is bounded on $\R^N$, and the proposed identity (\ref{boas166}), follow
immediately from (\ref{boas180}).
\end{proof}

If we assume that the even, entire function
\begin{equation*}\label{extreme67}
F : \C \rightarrow \C
\end{equation*}
has exponential type at most $2 \pi \delta$, then it follows from Lemma~\ref{lemboas2} that the entire function $\psi_N(F)(\bz)$, defined using
(\ref{extreme30}) and (\ref{extreme45}), has exponential type at most $2 \pi \delta$. Moreover,~\cite[equation (6.3)]{HV} asserts that
\begin{equation}\label{extreme70}
\hh \omega_{N-1} \int_{\R} |F(x)||x|^{2\nu + 1}\ \dx = \int_{\R^N} |\psi_N(F)(\bx)||\bx|^{2\nu + 2 - N}\ \dbx,
\end{equation}
where $-1 < \nu$. And if either of the integrals in (\ref{extreme70})
is finite then both integrals are finite, and we get (this is~\cite[equation (6.4)]{HV})
\begin{equation*}\label{extreme75}
\hh \omega_{N-1} \int_{\R} F(x) |x|^{2\nu + 1}\ \dx = \int_{\R^N} \psi_N(F)(\bx) |\bx|^{2\nu + 2 - N}\ \dbx.
\end{equation*}
The special case $2 \nu + 2 = N$ leads to the following result.

\begin{lemma}\label{lemboas3} Let $F : \C \rightarrow \C$ be an even entire function of exponential type at most $2 \pi \delta$. Assume that $N$ is a
positive integer such that
\begin{equation}\label{boas198}
\int_{\R} |F(x)||x|^{N - 1}\ \dx < \infty,
\end{equation}
and let
\begin{equation*}\label{boas191}
\psi_N(F) : \C^N \rightarrow \C
\end{equation*}
be the entire function of $N$ complex variables defined by \textrm{(\ref{extreme45})}. Then the restriction of $\psi_N(F)$ to $\R^N$ belongs to
\begin{equation*}\label{boas205}
L^1 \bigl(\R^N \bigr) \cap L^2 \bigl(\R^N \bigr).
\end{equation*}
Moreover, the Fourier transform
\begin{equation*}\label{boas212}
\widehat{\psi_N(F)} : \R^N \rightarrow \C
\end{equation*}
defined by
\begin{equation}\label{boas219}
\widehat{\psi_N(F)}(\bt) = \int_{\R^N} \psi_N(F)(\bx) e\bigl(-\bt^T \bx\bigr)\ \dbx,
\end{equation}
is a~continuous, radial function supported in the closed ball
\begin{equation}\label{boas226}
\big\{\bt \in \R^N : |\bt| \le \delta\big\}.
\end{equation}
\end{lemma}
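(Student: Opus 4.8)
The plan is to read off $L^1$-membership from the identity (\ref{extreme70}), to bootstrap to $L^2$-membership through Lemma~\ref{lemboas1}, and then to deduce the three properties of the Fourier transform (\ref{boas219}) from the radial symmetry of $\psi_N(F)$ on $\R^N$ together with the several-variable Paley--Wiener theorem.

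For the $L^1$ statement I would specialize (\ref{extreme70}) to the case $2\nu + 2 = N$, i.e.\ $\nu = \tfrac{N-2}{2}$, which is admissible because $1 \le N$ forces $-1 < \nu$. With this choice $2\nu + 1 = N - 1$ and $2\nu + 2 - N = 0$, so (\ref{extreme70}) reads
\begin{equation*}
\hh \omega_{N-1} \int_{\R} |F(x)| |x|^{N-1}\ \dx = \int_{\R^N} \bigl|\psi_N(F)(\bx)\bigr|\ \dbx ,
\end{equation*}
and the hypothesis (\ref{boas198}) makes the left-hand side finite; hence the restriction of $\psi_N(F)$ to $\R^N$ lies in $L^1(\R^N)$.

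For the $L^2$ statement I would first observe that $F$ itself belongs to $L^1(\R)$: splitting the integral in (\ref{boas198}) at $|x| = 1$, the piece over $|x| \ge 1$ dominates $\int_{|x|\ge 1}|F(x)|\ \dx$ because $|x|^{N-1} \ge 1$ there, while the piece over $|x| \le 1$ is finite because the entire function $F$ is continuous. Lemma~\ref{lemboas1} then gives $\|F\|_{\infty} \le 2\pi\delta\|F\|_1 < \infty$. Since the restriction of $\psi_N(F)$ to $\R^N$ is the radial function $\bx \mapsto F(|\bx|)$ by (\ref{extreme52}), passing to polar coordinates yields
\begin{equation*}
\int_{\R^N}\bigl|\psi_N(F)(\bx)\bigr|^2\ \dbx = \omega_{N-1}\int_0^{\infty}|F(x)|^2 x^{N-1}\ \dx \le \|F\|_{\infty}\,\omega_{N-1}\int_0^{\infty}|F(x)| x^{N-1}\ \dx ,
\end{equation*}
and the last integral is finite by (\ref{boas198}); hence the restriction of $\psi_N(F)$ to $\R^N$ also lies in $L^2(\R^N)$.

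It remains to treat the Fourier transform (\ref{boas219}). Continuity, in fact decay at infinity, is immediate from $\psi_N(F) \in L^1(\R^N)$ and the Riemann--Lebesgue lemma. Radiality follows because $\psi_N(F)$ is radial on $\R^N$ and the Fourier transform commutes with the action of the orthogonal group, so that $\widehat{\psi_N(F)}(\Theta\bt) = \widehat{\psi_N(F)}(\bt)$ for every $\Theta$ in $O(N)$. For the support, I would use Lemma~\ref{lemboas2} to get $\tau\bigl(\psi_N(F)\bigr) = \tau(F) \le 2\pi\delta$, so that $\psi_N(F)$ is an entire function of exponential type at most $2\pi\delta$ in Stein's sense while also lying in $L^2(\R^N)$; the several-variable Paley--Wiener theorem of Stein~\cite{stein1957} (cf.\ \cite[\S 3.4]{stein1971}) then forces $\widehat{\psi_N(F)}$ to be supported in the closed ball (\ref{boas226}). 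I expect this last step to be the point requiring the most care: the issue is to obtain the sharp Euclidean radius $\delta$ rather than some constant multiple of it, and this is exactly why the norm $\|\ \|$ of (\ref{extreme60}) was introduced — it agrees with the Hermitian norm along the purely imaginary directions, which are the directions that govern the Paley--Wiener support — so that control of the growth of $\psi_N(F)$ in terms of $\|\bz\|$ translates into the correct support statement. Everything else is routine.
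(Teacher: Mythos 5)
Your proposal is correct and follows essentially the same route as the paper: $L^1$-membership from (\ref{extreme70}) with $2\nu+2=N$, then $L^2$-membership by bounding $|\psi_N(F)|^2$ by $\|F\|_{\infty}$ times the $L^1$ integrand (you do this in polar coordinates, the paper does it directly on $\R^N$ via (\ref{boas166}), which is the same estimate), and finally continuity, radiality, and the support statement from the $L^1\cap L^2$ membership, Lemma~\ref{lemboas2}, and Stein's several-variable Paley--Wiener theorem. No gaps.
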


\begin{proof}
By taking $2 \nu + 2 = N$ in (\ref{extreme70}), we find that
\begin{equation}\label{boas240}
\hh \omega_{N-1} \int_{\R} |F(x)| |x|^{N - 1}\ \dx = \int_{\R^N} |\psi_N(F)(\bx)|\ \dbx < \infty.
\end{equation}
Therefore $\psi_N(F)$ belongs to $L^1 \bigl(\R^N \bigr)$, and the Fourier transform (\ref{boas219}) is a~continuous, radial function
(see~\cite[\S IV, Corollary 1.2]{stein1971}).

Clearly (\ref{boas198}) implies that $F$ belongs to $L^1 (\R)$. Then using (\ref{boas54}) and (\ref{boas166}) we get
\begin{equation}\label{boas247}
\big\|\psi_N(F)\big\|_{\infty} = \|F\|_{\infty} \le 2 \pi \delta \|F\|_1 < \infty.
\end{equation}
It follows from (\ref{boas240}) and (\ref{boas247}) that
\begin{equation*}\label{boas254}
\int_{\R^N} \bigl|\psi_N(F)(\bx)\bigr|^2 \ \dbx \le \big\|\psi_N(F)\big\|_{\infty} \int_{\R^N} \bigl|\psi_N(F)(\bx)\bigr|\ \dbx < \infty.
\end{equation*}
This verifies that $\psi_N(F)$ belongs to both $L^1 \bigl(\R^N \bigr)$ and $L^2 \bigl(\R^N \bigr)$. Hence the Fourier transform (\ref{boas219}) is a~continuous,
radial function. And by the generalization of the Paley-Wiener theorem proved by Stein (see~\cite[\S III, Theorem 4]{stein1957}
or~\cite[\S III Theorem 4.9]{stein1971}), we conclude that (\ref{boas219}) is supported on the closed ball (\ref{boas226}).
\end{proof}

\section{Beurling-Selberg extremal problems}

In this section we consider extremal problems first investigated by A. Beurling and later by A. Selberg and we describe the solution to a~more general extremal
problem that was obtained in~\cite{HV}. The method used in~\cite{HV} was based on earlier work of L. de Branges~\cite{deBranges1962},~\cite{deBranges1968}.
Further information about these problems can be found in~\cite{Logan1985},~\cite{M},~\cite{S1},~\cite{S2}, and~\cite{vaaler1985}.

Let $\xi$, $\delta$, and $\nu$ be real numbers that satisfy $0 < \delta$ and $-1 < \nu$. Then let $z \mapsto S(z)$ and $z \mapsto T(z)$ be real entire
functions such that
\begin{equation}\label{deB400}
\text{$S(x) \le \sgn(x - \xi) \le T(x)$ for all real $x$,}
\end{equation}
and
\begin{equation}\label{deB404}
\text{$z \mapsto S(z)$ and $z \mapsto T(z)$ have exponential type at most $2\pi\delta$.}
\end{equation}
We define the real number $u_{\nu}(\xi, \delta)$ to be the infimum of the collection of positive numbers
\begin{equation}\label{deB407}
\hh \int_{\R} \big(T(x) - S(x)\big) |x|^{2\nu + 1}\ \dx
\end{equation}
taken over the set of all pairs of real entire functions $S(z)$ and $T(z)$ that satisfy (\ref{deB400}) and (\ref{deB404}), and for which the value of the
integral in (\ref{deB407}) is finite. The problem considered by Beurling was the special case $\xi = 0$ and $\nu = -\h$. Related extremal problems
are considered in~\cite{CV2},~\cite{CV3},~\cite{CG2015},~\cite{CLV},~\cite[\S 1.1 and \S 1.2]{CL2014},~\cite[\S 1.2 and \S 1.4]{CL2017},~\cite{CL2018},~\cite{Lit0},~\cite{Lit1},~\cite{Lit2}, and~\cite{Lit3}.

In~\cite[Theorem 1]{HV} the authors proved that for $0 < \delta$ and $-1 < \nu$, the infimum $u_{\nu}(\xi, \delta)$ is positive.
They also proved that there exists a~\textit{unique} pair of real entire functions
\begin{equation}\label{deB410}
z \mapsto s_{\nu}(z; \xi, \delta)\quad\text{and}\quad z \mapsto t_{\nu}(z; \xi, \delta)
\end{equation}
that satisfy the inequality
\begin{equation}\label{deB411}
s_{\nu}(x; \xi, \delta) \le \sgn(x - \xi) \le t_{\nu}(x; \xi, \delta) \quad\text{for all real $x$},
\end{equation}
the functions
\begin{equation}\label{deB412}
\text{$z \mapsto s_{\nu}(z; \xi, \delta)$ and $z \mapsto t_{\nu}(z; \xi, \delta)$ have exponential type at most $2 \pi \delta$},
\end{equation}
and satisfy the identity
\begin{equation}\label{deB414}
u_{\nu}(\xi, \delta) = \hh\int_{\R} \big(t_{\nu}(x; \xi, \delta) - s_{\nu}(x; \xi, \delta)\big) |x|^{2\nu + 1}\ \dx.
\end{equation}
The functions (\ref{deB410}) are defined in~\cite[equation (5.7) and (5.8)]{HV}.

In~\cite[Theorem 1]{HV} the authors proved that $u_{\nu}(\xi, \delta)$ satisfies the following identities:
\begin{itemize}
\item[(i)] $u_{\nu}(\xi, \delta) = u_{\nu}(-\xi, \delta)$,
\item[(ii)] if $0 < \kappa$ then $u_{\nu}(\xi, \delta) = \kappa^{2\nu + 2} u_{\nu}\bigl(\kappa^{-1} \xi, \kappa \delta\bigr)$,
\item[(iii)] $u_{\nu}(0, \delta) = \Gamma(\nu + 1) \Gamma(\nu + 2)\bigl(2/\pi \delta\bigr)^{2\nu + 2}$,
\item[(iv)] if $0 < \xi$ then (this is~\cite[equation (1.4)]{HV})
\begin{equation}\label{deB419}
u_{\nu}\bigl(\xi, \pi^{-1}\bigr) = \frac{2\xi^{2\nu + 1}}{\xi J_{\nu}(\xi)^2 + \xi J_{\nu + 1}(\xi)^2 - (2\nu + 1) J_{\nu}(\xi) J_{\nu + 1}(\xi)},
\end{equation}
where $z \mapsto J_{\nu}(z)$ and $z \mapsto J_{\nu + 1}(z)$ are Bessel functions.
\end{itemize}

Using elementary properties of Bessel functions (see~\cite{lommel1868} and~\cite[\S 3.2]{watson1944}) we find that
\begin{equation}\label{deB423}
\lim_{\xi \rightarrow \infty} \xi J_{\nu}(\xi)^2 + \xi J_{\nu + 1}(\xi)^2 - (2\nu + 1) J_{\nu}(\xi) J_{\nu + 1}(\xi) = 2 \pi^{-1},
\end{equation}
and
\begin{equation}\label{deB430}
\begin{split}
\dex \bigg(x J_{\nu}(x)^2 + x J_{\nu + 1}(x)^2 -&(2\nu + 1) J_{\nu}(x) J_{\nu + 1}(x)\bigg)\\
		&= (2\nu + 1) x^{-1} J_{\nu}(x) J_{\nu + 1}(x).
\end{split}
\end{equation}
If $0 < \xi$ then (\ref{deB423}) and (\ref{deB430}) lead to the identity
\begin{equation}\label{deB435}
    \begin{split}
\xi J_{\nu}(\xi)^2 &+ \xi J_{\nu + 1}(\xi)^2 -(2\nu + 1) J_{\nu}(\xi) J_{\nu + 1}(\xi)\\
		&= 2 \pi^{-1} - (2\nu + 1) \int_{\xi}^{\infty} x^{-1} J_{\nu}(x) J_{\nu + 1}(x)\ \dx.
\end{split}
\end{equation}
Combining (\ref{deB419}) and (\ref{deB435}) we find that
\begin{equation*}\label{deB442}
u_{\nu}\bigl(\xi, \pi^{-1}\bigr) = \xi^{2\nu + 1} \bigg(\pi^{-1} - \bigl(\nu + \hh\bigr) \int_{\xi}^{\infty} x^{-1} J_{\nu}(x) J_{\nu + 1}(x)\ \dx\bigg)^{-1}.
\end{equation*}
Then by applying the identity (ii) with $\kappa = (\pi \delta)^{-1}$, we get the general formula
\begin{equation}\label{deB447}
u_{\nu}\bigl(\xi, \delta\bigr) = \delta^{-1} \xi^{2\nu + 1} \bigg(1 - \tfrac{\pi}{2} (2\nu + 1) \int_{\pi \delta \xi}^{\infty} x^{-1} J_{\nu}(x) J_{\nu + 1}(x)\ \dx\bigg)^{-1},
\end{equation}
which holds for $0 < \xi$, $0 < \delta$ and $-1 < \nu$. In the particular case where $2\nu + 2 = N$ the map (\ref{deB447}) is the same as (\ref{intro98}) in the statement of Theorem~\ref{thmintro1}.

In the special case $\nu = -\h$ and $\xi = 0$ considered by Beurling, the two extremal functions (\ref{deB410}) can be represented using interpolation
formulas that were given in~\cite[Theorem 9 and Theorem 10]{vaaler1985}.

Next we describe an extremal problem for a~ball in $\R^N$ centered at $\bo$ that was considered in~\cite{HV}. Related results can be found in~\cite{felipe2018}.

Let $R$, $\delta$, and $\nu$, be real numbers such that $0 < R$, $0 < \delta$, and $-1 < \nu$. Then let
\begin{equation}\label{intro191}
F : \C^N \rightarrow \C,\quad\text{and}\quad G : \C^N \rightarrow \C,
\end{equation}
be real entire functions such that
\begin{equation}\label{intro192}
F(\bx) \le \chi_R(\bx) \le G(\bx)\quad\text{at each point $\bx$ in $\R^N$},
\end{equation}
and
\begin{equation}\label{intro193}
\text{$F(\bz)$ and $G(\bz)$ have exponential type at most $2 \pi \delta$}.
\end{equation}
Define $H_{\nu}(N, R, \delta)$ to be the infimum of positive numbers
\begin{equation}\label{intro197}
\int_{\R^N} \big(G(\bx) - F(\bx)\big) |\bx|^{2\nu + 2 - N}\ \dbx
\end{equation}
taken over the set of all ordered pairs $(F, G)$ of real entire functions (\ref{intro191}) that satisfy (\ref{intro192}), (\ref{intro193}), and for which the
integral (\ref{intro197}) is finite.

In the special case $\nu = -\h$ and $N = 1$ the problem of evaluating (or estimating) the infimum $H_{-\h}(1, R, \delta)$ was considered by Selberg~\cite{S1},~\cite{S2}. For $N = 1$ the simple identity
\begin{equation}\label{intro200}
\chi_R(x) = \hh\bigl(\sgn(x + R) - \sgn(x - R)\bigr),
\end{equation}
which holds for all real $x$, indicates the connection between Beurling's extremal problem and the extremal problem considered by Selberg.
Selberg observed (at least in the case $\nu = -\h$) that for all real $x$ the inequalities
\begin{equation}\label{intro202}
\hh\big(s_{\nu}(x; -R, \delta) - t_{\nu}(x; R, \delta)\big) \le \chi_R(x) \le \hh\big(t_{\nu}(x; -R, \delta) - s_{\nu}(x; R, \delta)\big),
\end{equation}
follow from the two inequalities in (\ref{deB411}) and the identity (\ref{intro200}). Therefore we define the two real entire functions
\begin{equation}\label{intro218}
f_{\nu}(z; R, \delta) = \hh\big(s_{\nu}(z; -R, \delta) - t_{\nu}(z; R, \delta)\big)
\end{equation}
and
\begin{equation}\label{intro222}
g_{\nu}(z; R, \delta) = \hh\big(t_{\nu}(z; -R, \delta) - s_{\nu}(z; R, \delta)\big).
\end{equation}
It follows from (\ref{deB412}) that both $z \mapsto f_{\nu}(z; R, \delta)$ and $z \mapsto g_{\nu}(z; R, \delta)$ have exponential type at most $2 \pi \delta$.
Clearly the inequality (\ref{intro202}) is also
\begin{equation*}\label{intro227}
f_{\nu}(x; R, \delta) \le \chi_R(x) \le g_{\nu}(x; R, \delta)
\end{equation*}
for all real $x$ and all positive values of $R$.

From (\ref{deB411}) we find that \textit{both} of the inequalities
\begin{equation*}\label{intro205}
s_{\nu}(x; R, \delta) \le \sgn(x - R) \le t_{\nu}(x; R, \delta) \quad\text{for all real $x$},
\end{equation*}
and
\begin{equation*}\label{intro207}
-t_{\nu}(-x; R, \delta) \le \sgn(x + R) \le -s_{\nu}(-x; R, \delta) \quad\text{for all real $x$},
\end{equation*}
must hold. As the extremal functions $x \mapsto s_{\nu}(x; R, \delta)$ and $x \mapsto t_{\nu}(x; R, \delta)$ that satisfy (\ref{deB411}), (\ref{deB412}), and
(\ref{deB414}) are \textit{unique}, we also get
\begin{equation}\label{intro214}
-s_{\nu}(-x; R, \delta) = t_{\nu}(x; -R, \delta)
\end{equation}
for all real $x$. Then it follows from (\ref{intro214}) that both (\ref{intro218}) and (\ref{intro222}) are \textit{even} entire functions.

Next we describe the solution to the problem of evaluating (or estimating) the infimum $H_{\nu}(N, R, \delta)$ for a~ball in $\R^N$ which was obtained
in~\cite{HV}. We write
\begin{equation}\label{intro232}
\bz \mapsto \F_{\nu}(\bz; R, \delta),\quad\text{and}\quad \bz \mapsto \G_{\nu}(\bz; R, \delta),
\end{equation}
for the two real entire functions defined in~\cite[equation (1.22) and (1.23)]{HV}. From our discussion of the maps (\ref{extreme45}) and (\ref{extreme52}),
it follows that the functions (\ref{intro232}) are also given by
\begin{equation}\label{intro238}
\F_{\nu}(\bz; R, \delta) = \psi_N\bigl(f_{\nu}\bigr)(\bz; R, \delta),\quad\text{and}\quad \G_{\nu}(\bz; R, \delta) = \psi_N\bigl(g_{\nu}\bigr)(\bz; R, \delta),
\end{equation}
where $f_{\nu}$ and $g_{\nu}$ are the even functions defined in (\ref{intro218}) and (\ref{intro222}). It follows from~\cite[Theorem 3]{HV} that the
functions (\ref{intro232}) are both real entire functions of exponential type at most $2 \pi \delta$. And it follows from the representations (\ref{intro238})
that the restriction of these functions to $\R^N$ are both radial functions that satisfy the inequalities
\begin{equation}\label{intro249}
\F_{\nu}(\bx; R,\delta) \le \chi_R(\bx) \le \G_{\nu}(\bx; R,\delta)
\end{equation}
for all $\bx$ in $\R^N$. Moreover, these functions satisfy the integral identity (this is~\cite[equation (1.27)]{HV})
\begin{equation}\label{intro253}
\omega_{N-1} u_{\nu}(R, \delta) = \int_{\R^N} \big(\G_{\nu}(\bx; R, \delta) - \F_{\nu}(\bx; R, \delta)\big) |\bx|^{2\nu + 2 - N}\ \dbx,
\end{equation}
where $\omega_{N-1}$ (the surface area of a~unit ball in $\R^N$) was defined in (\ref{intro44}). Now (\ref{intro253}) (this is~\cite[equation (1.25)]{HV})
implies that
\begin{equation}\label{intro257}
H_{\nu}(N, R, \delta) \le \omega_{N-1} u_{\nu}(R, \delta).
\end{equation}
It was also shown in~\cite[Theorem 3]{HV} that there is equality in the inequality (\ref{intro257}) if and only if
\begin{equation*}\label{intro259}
J_{\nu}(\pi \delta R) J_{\nu + 1}(\pi \delta R) = 0.
\end{equation*}

We now restrict our attention to the case $2 \nu + 2 = N$.

\begin{lemma}\label{lemintro1} Let $R$, $\delta$ and $\nu$ be real numbers such that $0 < R$, $0 < \delta$, and $-1 < \nu$, and let $N$ be
a positive integer such that $2 \nu + 2 = N$. Let
\begin{equation*}\label{intro263}
\F_{\nu}(\bz; R, \delta) = \psi_N\bigl(f_{\nu}\bigr)(\bz; R, \delta),\quad\text{and}\quad \G_{\nu}(\bz; R, \delta) = \psi_N\bigl(g_{\nu}\bigr)(\bz; R, \delta),
\end{equation*}
be the real entire functions defined by \textrm{(\ref{intro238})}, or defined in~\cite[equation (1.22) and (1.23)]{HV}. Then both the restriction of
$\F_{\nu}(\bz; R, \delta)$ to $\R^N$, and the restriction of $\G_{\nu}(\bz; R, \delta)$ to $\R^N$, belong to
\begin{equation}\label{intro265}
L^1 \bigl(\R^N \bigr) \cap L^2 \bigl(\R^N \bigr).
\end{equation}
Moreover, both of the Fourier transforms
\begin{equation*}\label{intro267}
\wsF_{\nu} : \R^N \rightarrow \C,\quad\text{and}\quad \wsG_{\nu} : \R^N \rightarrow \C,
\end{equation*}
defined by
\begin{equation}\label{intro269}
\wsF_{\nu}(\bt; R,\delta) = \int_{\R^N} \F_{\nu}(\bz; R,\delta) e\bigl(-\bt^T \bx\bigr)\ \dbx,
\end{equation}
and
\begin{equation}\label{intro271}
\wsG_{\nu}(\bt; R,\delta) = \int_{\R^N} \G_{\nu}(\bz; R,\delta) e\bigl(-\bt^T \bx\bigr)\ \dbx,
\end{equation}
are continuous functions supported on the closed ball
\begin{equation*}\label{intro273}
\big\{\bt \in \R^N : |\bt| \le \delta\big\}.
\end{equation*}
\end{lemma}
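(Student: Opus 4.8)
The plan is to derive Lemma~\ref{lemintro1} directly from Lemma~\ref{lemboas3}, applied separately to the two even entire functions of one complex variable $z \mapsto f_{\nu}(z; R, \delta)$ and $z \mapsto g_{\nu}(z; R, \delta)$ defined in (\ref{intro218}) and (\ref{intro222}). By (\ref{intro238}) we have $\F_{\nu}(\bz; R, \delta) = \psi_N(f_{\nu})(\bz; R, \delta)$ and $\G_{\nu}(\bz; R, \delta) = \psi_N(g_{\nu})(\bz; R, \delta)$, so the conclusions of Lemma~\ref{lemboas3} for $f_{\nu}$ and for $g_{\nu}$ are exactly the conclusions asserted here. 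We have already recorded that $f_{\nu}$ and $g_{\nu}$ are even (a consequence of the uniqueness identity (\ref{intro214})) and that, by (\ref{deB412}) together with the definitions (\ref{intro218}) and (\ref{intro222}), both have exponential type at most $2\pi\delta$. Hence, with $2\nu + 2 = N$, the only hypothesis of Lemma~\ref{lemboas3} that remains to be verified is the moment bound (\ref{boas198}): that $\int_{\R} |f_{\nu}(x)||x|^{N-1}\ \dx$ and $\int_{\R} |g_{\nu}(x)||x|^{N-1}\ \dx$ are finite.

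First I would check that $\int_{\R} \bigl(g_{\nu}(x; R, \delta) - f_{\nu}(x; R, \delta)\bigr)|x|^{N-1}\ \dx$ is finite. Subtracting (\ref{intro218}) from (\ref{intro222}) gives
\[
g_{\nu}(x; R, \delta) - f_{\nu}(x; R, \delta) = \hh\bigl(t_{\nu}(x; -R, \delta) - s_{\nu}(x; -R, \delta)\bigr) + \hh\bigl(t_{\nu}(x; R, \delta) - s_{\nu}(x; R, \delta)\bigr),
\]
so by the identity (\ref{deB414}) (with $2\nu + 1 = N-1$) and the symmetry $u_{\nu}(-R, \delta) = u_{\nu}(R, \delta)$ this integral equals $2 u_{\nu}(R, \delta) < \infty$. (Alternatively one may apply (\ref{extreme70}) with $2\nu + 2 = N$ to the even function $g_{\nu} - f_{\nu}$, which is nonnegative on $\R$ by (\ref{intro202}), and invoke (\ref{intro253}) together with the linearity of $\psi_N$.)

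Next I would bound $|f_{\nu}|$ and $|g_{\nu}|$ pointwise in terms of $g_{\nu} - f_{\nu}$ and a compactly supported term, using that $\chi_R$ takes values in $[0,1]$ and is supported on $\{|x| \le R\}$. For $|x| > R$ the inequality (\ref{intro202}) gives $f_{\nu}(x) \le 0 \le g_{\nu}(x)$, hence $|f_{\nu}(x)| \le (g_{\nu} - f_{\nu})(x)$ and $|g_{\nu}(x)| \le (g_{\nu} - f_{\nu})(x)$. For $|x| \le R$ the same inequality gives $g_{\nu}(x) \ge 0$ and $f_{\nu}(x) \le 1$, whence $-(g_{\nu} - f_{\nu})(x) \le f_{\nu}(x) \le 1$ and $0 \le g_{\nu}(x) \le 1 + (g_{\nu} - f_{\nu})(x)$, so $|f_{\nu}(x)|, |g_{\nu}(x)| \le 1 + (g_{\nu} - f_{\nu})(x)$. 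Multiplying by $|x|^{N-1}$, integrating, and using $\int_{-R}^{R} |x|^{N-1}\ \dx = 2R^N/N$ together with the finiteness of $\int_{\R}(g_{\nu} - f_{\nu})(x)|x|^{N-1}\ \dx$ established above, one obtains $\int_{\R}|f_{\nu}(x)||x|^{N-1}\ \dx < \infty$ and $\int_{\R}|g_{\nu}(x)||x|^{N-1}\ \dx < \infty$. Thus (\ref{boas198}) holds for both $f_{\nu}$ and $g_{\nu}$, and Lemma~\ref{lemboas3} yields that $\F_{\nu}(\cdot; R, \delta)$ and $\G_{\nu}(\cdot; R, \delta)$, restricted to $\R^N$, belong to $L^1(\R^N) \cap L^2(\R^N)$ and that $\wsF_{\nu}$ and $\wsG_{\nu}$ are continuous (indeed radial) functions supported on $\{\bt \in \R^N : |\bt| \le \delta\}$, which is the assertion of the lemma.

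I expect the main obstacle to be precisely the passage from the finiteness of the single integral of $g_{\nu} - f_{\nu}$ against $|x|^{N-1}$ to the finiteness of the two individual moment integrals of $|f_{\nu}|$ and $|g_{\nu}|$; this is where the boundedness and compact support of $\chi_R$ are essential. Everything else is bookkeeping: identifying $\F_{\nu}$ and $\G_{\nu}$ with $\psi_N(f_{\nu})$ and $\psi_N(g_{\nu})$, and quoting Lemma~\ref{lemboas3} along with the structural facts about $f_{\nu}$ and $g_{\nu}$ already assembled in Section~3.
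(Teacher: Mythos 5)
Your proof is correct, and it takes a route that differs from the paper's in a useful way. The paper proves $L^1(\R^N)$ membership of $\F_{\nu}$ and $\G_{\nu}$ directly in $\R^N$, via the decomposition $|\G_{\nu}| + |\F_{\nu}| = (\G_{\nu} - \F_{\nu}) + 2\F_{\nu}^+ \le (\G_{\nu} - \F_{\nu}) + 2\chi_R$ with $\F_{\nu}^+ = \max\{0, \F_{\nu}\}$, together with (\ref{intro278}); it then invokes (\ref{extreme70}) to deduce that $f_{\nu}$ and $g_{\nu}$ satisfy the one-dimensional moment hypothesis (\ref{boas198}) of Lemma~\ref{lemboas3}, and finally applies Lemma~\ref{lemboas3} to recover (among other things) the $L^1(\R^N)$ membership a second time along with the $L^2$ membership and Paley--Wiener support. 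You instead carry out the same decomposition idea entirely in one variable: bounding $|f_{\nu}|$ and $|g_{\nu}|$ by $(g_{\nu} - f_{\nu}) + \mathbf{1}_{[-R,R]}$ and controlling $\int_{\R}(g_{\nu}-f_{\nu})|x|^{N-1}\,\dx = 2u_{\nu}(R,\delta)$ via (\ref{deB414}) and the symmetry $u_{\nu}(-R,\delta)=u_{\nu}(R,\delta)$, which establishes (\ref{boas198}) directly and lets a single application of Lemma~\ref{lemboas3} deliver every conclusion. This is cleaner: it avoids the paper's detour of proving the $L^1(\R^N)$ estimate by hand and then re-deriving it from Lemma~\ref{lemboas3}, and it replaces the $N$-dimensional integral identity (\ref{intro253}) with the more primitive (\ref{deB414}). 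Your case analysis ($|x| > R$ versus $|x| \le R$) and the resulting pointwise bounds are all correct.
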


\begin{proof}
At each point $\bx$ in $\R^N$ the restrictions of $\F_{\nu}$ and $\G_{\nu}$ to $\R^N$ satisfy the inequality (\ref{intro249}).
It follows from (\ref{intro253}), and our assumption that $2 \nu + 2 = N$, that
\begin{equation}\label{intro278}
\int_{\R^N} \big(\G_{\nu}(\bx; R, \delta) - \F_{\nu}(\bx; R, \delta)\big)\ \dbx < \infty.
\end{equation}
Let
$\F_{\nu}^+ : \R^N \rightarrow [0, \infty)$ and $ \F_{\nu}^- : \R^N \rightarrow [0, \infty)$
be defined by
\begin{equation*}
\F_{\nu}^+ (\bx; R, \delta) = \max\big\{0, \F_{\nu}(\bx; R, \delta)\big\}, \text{ and }
\F_{\nu}^- (\bx; R, \delta) = \max\big\{0, - \F_{\nu}(\bx; R, \delta)\big\}.
\end{equation*}
It follows that
\begin{equation*}
\F_{\nu}(\bx; R, \delta) = \F_{\nu}^+ (\bx; R, \delta) - \F_{\nu}^- (\bx; R, \delta)
\end{equation*}
and
\begin{equation*}
\bigl|\F_{\nu}(\bx; R, \delta)\bigr| = \F_{\nu}^+ (\bx; R, \delta) + \F_{\nu}^- (\bx; R, \delta).
\end{equation*}
From (\ref{intro249}) we also get the inequality
\begin{equation}\label{intro293}
\F_{\nu}^+ (\bx; R, \delta) \le \chi_R(\bx)
\end{equation}
at each point $\bx$ in $\R^N$. Then it follows using (\ref{intro293}) that
\begin{equation}\label{intro295}
\begin{split}
\bigl|\G_{\nu}(\bx; R, \delta)\bigr|&+ \bigl|\F_{\nu}(\bx; R, \delta)\bigr|\\
	&= \G_{\nu}(\bx; R, \delta) + \F_{\nu}^+ (\bx; R, \delta) + \F_{\nu}^- (\bx; R, \delta)\\
	&= \G_{\nu}(\bx; R, \delta) - \F_{\nu}(\bx; R, \delta) + 2 \F_{\nu}^+ (\bx; R, \delta)\\
	&\le \G_{\nu}(\bx; R, \delta) - \F_{\nu}(\bx; R, \delta) + 2 \chi_R(\bx)
\end{split}
\end{equation}
at each point $\bx$ in $\R^N$. From (\ref{intro249}) and (\ref{intro278}) we find that the nonnegative valued function
\begin{equation*}\label{intro296}
\bx \mapsto \G_{\nu}(\bx; R, \delta) - \F_{\nu}(\bx; R, \delta)
\end{equation*}
belongs to $L^1 \bigl(\R^N \bigr)$, and it is obvious that
\begin{equation*}\label{intro299}
\bx \mapsto \chi_R(\bx)
\end{equation*}
also belongs to $L^1 \bigl(\R^N \bigr)$. Then (\ref{intro295}) implies that both the restriction of
$\F_{\nu}(\bz; R, \delta)$ to $\R^N$ and the restriction of $\G_{\nu}(\bz; R, \delta)$ to $\R^N$ belong to $L^1 \bigl(\R^N \bigr)$.

Let
\begin{equation}\label{intro301}
f_{\nu} : \C \rightarrow \C,\quad\text{and}\quad g_{\nu} : \C \rightarrow \C
\end{equation}
be the real entire functions of exponential type at most $2 \pi \delta$ defined by (\ref{intro218}) and (\ref{intro222}).
The functions $\F_{\nu}$ and $G_{\nu}$ are the image of $f_{\nu}$ and $g_{\nu}$, respectively, with respect to the map $\psi_N$. That is, we have
\begin{equation*}\label{intro303}
\F_{\nu}(\bz; R, \delta) = \psi_N\bigl(f_{\nu}\bigr)(\bz; R, \delta),\quad\text{and}\quad \G_{\nu}(\bz; R, \delta) = \psi_N\bigl(g_{\nu}\bigr)(\bz; R, \delta).
\end{equation*}
It follows from (\ref{extreme70}), our assumption that $N = 2 \nu + 2$, and what we have already proved, that both
\begin{equation*}\label{intro305}
\hh \omega_{N-1} \int_{\R} |f_{\nu}(x; R, \delta)||x|^{N - 1}\ \dx = \int_{\R^N} |\F_{\nu}(\bx; R, \delta)|\ \dbx < \infty,
\end{equation*}
and
\begin{equation*}\label{extreme307}
\hh \omega_{N-1} \int_{\R} |g_{\nu}(x; R, \delta)||x|^{N - 1}\ \dx = \int_{\R^N} |\G_{\nu}(\bx; R, \delta)|\ \dbx < \infty.
\end{equation*}
Therefore the two functions (\ref{intro301}) satisfy the hypotheses of Lemma~\ref{lemboas3}. Then it follows from the conclusion of Lemma~\ref{lemboas3}
that the functions $\F_{\nu} = \psi_N\bigl(f_{\nu}\bigr)$ and $\G_{\nu}\bigl(g_{\nu}\bigr)$ belong to the set (\ref{intro265}). The properties attributed to
the Fourier transforms (\ref{intro269}) and (\ref{intro271}) also follow from Lemma~\ref{lemboas3}.
\end{proof}

\section{The Poisson formula}

We require the following elementary lemma.

\begin{lemma}\label{lemappl1}
Let $A$ be an $M\times N$ matrix with entries in $\R$ and
\begin{equation*}\label{intro94}
1 \le N = \rank A \le M.
\end{equation*}
Then there exists a~real $N\times N$, positive definite, symmetric matrix $S$ such
\begin{equation}\label{intro101}
|A\bx| = |S\bx|
\end{equation}
for all vectors $\bx$ in $\R^N$. Moreover, we have
\begin{equation}\label{intro108}
|A|^{-1} \le \min\big\{\bigl|S^{-1}\bn\bigr|: \bn\in\Z^N,\ \bn\not= \bo\big\}.
\end{equation}
\end{lemma}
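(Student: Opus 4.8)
The plan is to construct $S$ via the polar decomposition of $A$. Since $A$ has rank $N$, the $N \times N$ matrix $A^T A$ is symmetric and positive definite, so it has a unique symmetric positive definite square root; I would set $S = (A^T A)^{1/2}$. Then for every $\bx \in \R^N$ we have $|A\bx|^2 = \bx^T A^T A \bx = \bx^T S^2 \bx = \bx^T S^T S \bx = |S\bx|^2$, which gives (\ref{intro101}) after taking square roots. This also shows that $S$ is invertible, since $A^T A$ is.

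Next I would verify the norm inequality (\ref{intro108}). First observe that $S$ and $A$ have the same operator norm: from (\ref{intro101}), $|S| = \sup\{|S\bx| : |\bx| \le 1\} = \sup\{|A\bx| : |\bx| \le 1\} = |A|$. Now for any nonzero $\bn \in \Z^N$, write $\bwy = S^{-1}\bn$, so $\bwy \neq \bo$ and $\bn = S\bwy$. Then $|\bn| = |S\bwy| \le |S|\,|\bwy| = |A|\,|S^{-1}\bn|$, hence $|S^{-1}\bn| \ge |A|^{-1}|\bn| \ge |A|^{-1}$, using that $|\bn| \ge 1$ because $\bn$ is a nonzero integer vector. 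Taking the minimum over all such $\bn$ yields (\ref{intro108}).

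I do not anticipate a serious obstacle here; the only point requiring a little care is the existence and uniqueness of the positive definite symmetric square root of $A^T A$, which is standard linear algebra (diagonalize $A^T A = Q^T D Q$ with $Q$ orthogonal and $D$ diagonal with positive entries, and set $S = Q^T D^{1/2} Q$). One should also note explicitly that $S$ being symmetric and positive definite forces $S^{-1}$ to be symmetric and positive definite as well, so the quantity on the right of (\ref{intro108}) is a minimum over a genuine set of positive reals — and the minimum is attained because $\{S^{-1}\bn : \bn \in \Z^N\}$ is itself a lattice of rank $N$, hence discrete, so it has a shortest nonzero vector.
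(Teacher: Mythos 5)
Your proof is correct and constructs exactly the same $S = (A^T A)^{1/2}$ as the paper, with the same verification that $|A\bx| = |S\bx|$. For the lattice bound (\ref{intro108}) you use the slightly slicker coordinate-free observation that $|S| = |A|$ together with $|\bn| \le |S|\,|S^{-1}\bn|$ and $|\bn|\ge 1$, whereas the paper carries out the equivalent estimate explicitly in the eigenbasis of $A^T A$; the two arguments are the same idea in different dress, and both are sound.
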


\begin{proof}
The matrix $A^T A$ is $N\times N$, positive definite and symmetric. Hence there exists (see~\cite[Theorem 8.6.10]{artin2011}) an $N\times N$ orthogonal matrix $\Phi$ and an $N\times N$ diagonal matrix $D = [d_n]$ with positive
diagonal entries $d_1, d_2, \dots, d_N$, such that
\begin{equation*}\label{intro115}
A^T A = \Phi^T D\Phi.
\end{equation*}
We set
\begin{equation*}\label{intro122}
S = \Phi^T D^{\h}\Phi,\quad\text{where}\quad D^{\h} = \bigl[d_n^{\h}\bigr].
\end{equation*}
It follows that $S$ is an $N\times N$, positive definite, symmetric matrix, and
\begin{equation*}\label{intro129}
\begin{split}
|S\bx|^2 &= \bx^T S^T S\bx = \bx^T \Phi^T D^{\h}\Phi\Phi^T D^{\h}\Phi\bx\\
			 &= \bx^T A^T A\bx = |A\bx|^2.
\end{split}
\end{equation*}
for all vectors $\bx$ in $\R^N$. This establishes (\ref{intro101}).

As is well known, we have
\begin{equation}\label{intro136}
\begin{split}
|A|^2 &= \sup\big\{\bx^T \Phi^T D\Phi\bx: \bx\in\R^N,\ |\bx|\le 1\big\}\\
		 &= \sup\big\{\bwy^T D\bwy: \bwy\in\R^N,\ |\bwy|\le 1\big\}\\
		 &= \sup\Big\{\sum_{n=1}^N d_ny_n^2 : \bwy\in\R^N,\ |\bwy|\le 1\Big\}\\
		 &= \max\big\{d_n: 1\le n\le N\big\}\\
		 &= \Bigl(\min\big\{d_n^{-1}: 1\le n\le N\big\}\Bigr)^{-1}.
\end{split}
\end{equation}
Let $\bn\not= \bo$ belong to $\Z^N$ and let $\be_1, \be_2, \dots, \be_N$ denote the standard basis vectors in $\Z^N \subseteq \R^N$. Then
\begin{equation}\label{intro143}
|\Phi\bn|^2 = \sum_{n = 1}^N \bigl(\be_n^T \Phi \bn\bigr)^2 = |\bn|^2 \ge 1,
\end{equation}
and it follows from (\ref{intro136}) and (\ref{intro143}) that
\begin{equation*}\label{intro150}
\begin{split}
\bigl|S^{-1}\bn\bigr|^2 &= \bn^T \Phi^T D^{-1}\Phi\bn\\
				 &= \sum_{n=1}^N d_n^{-1}\bigl(\be_n^T \Phi\bn\bigr)^2 \\
				 &\ge \Bigl(\min\big\{d_n^{-1}: 1\le n\le N\big\}\Bigr) |\Phi \bn|^2 \\
				 &\ge |A|^{-2}.
\end{split}
\end{equation*}
This shows that the inequality (\ref{intro108}) holds.
\end{proof}

Let $A$ and $S$ be real matrices as in Lemma~\ref{lemappl1}, which satisfy (\ref{intro101}) and (\ref{intro108}). Then we have
\begin{equation}\label{intro154}
\bigl(\det A^T A\bigr)^{\h} \sum_{\bm\in\Z^N} \chi_R\bigl(A(\bm + \bx)\bigr) = (\det S) \sum_{\bn \in \Z^N} \chi_R\bigl(S(\bn + \bx)\bigr),
\end{equation}
where $\bwy \mapsto \chi_R(\bwy)$ is the normalized characteristic function defined in (\ref{intro50}). It is obvious that the function defined by
\begin{equation*}\label{intro159}
\bx \mapsto (\det S) \sum_{\bn\in\Z^N} \chi_R\bigl(S(\bn + \bx)\bigr) - V_N R^N
\end{equation*}
is constant on cosets of the quotient group $\R^N /\Z^N$.

Suppose that $F:\C^N \rightarrow \C$ is a~real entire function of exponential type at most $2 \pi \delta$. We also assume that the restriction
of $F$ to $\R^N$ is a~radial function that belongs to
\begin{equation*}\label{intro162}
L^1 \bigl(\R^N \bigr) \cap L^2 \bigl(\R^N \bigr).
\end{equation*}
It follows that the Fourier transform
\begin{equation*}\label{intro164}
\wF:\R^N \rightarrow \C
\end{equation*}
is a~continuous function, and it follows from the generalization of the Paley-Wiener theorem established by Stein (see~\cite[\S III, Theorem 4]{stein1957}
or~\cite[\S III Theorem 4.9]{stein1971}), that
\begin{equation}\label{intro166}
\wF(\bt) = \int_{\R^N} F(\bx) e\bigl(-\bt^T \bx\bigr)\ \dbx = 0,\quad\text{if $\bt \in \R^N$ and $\delta\le |\bt|$}.
\end{equation}

The hypothesis (\ref{intro166}) leads to a~simple form of the Poisson summation formula. Because the $N \times N$ matrix $S$ is symmetric the Poisson
summation formula takes the form (see~\cite[Chapter VII, Corollary 2.6]{stein1971})
\begin{equation}\label{intro174}
\begin{split}
(\det S) \sum_{\bm\in\Z^N}F\bigl(S(\bm + \bx)\bigr) &= \sum_{\bn\in\Z^N}\wF\bigl(S^{-1}\bn\bigr)e\bigl((S\bx)^T S^{-1}\bn\bigr)\\
	&= \sum_{\bn\in\Z^N}\wF\bigl(S^{-1}\bn\bigr)e\bigl(\bx^T S^T S^{-1}\bn\bigr)\\
	&= \sum_{\bn\in\Z^N}\wF\bigl(S^{-1}\bn\bigr)e\bigl(\bx^T \bn\bigr).
\end{split}
\end{equation}
And because $\wF$ is a~continuous function with compact support, it follows that each sum on the right of (\ref{intro174}) contains only finitely many
nonzero terms. Applying (\ref{intro166}) we find that
\begin{equation}\label{intro179}
(\det S) \sum_{\bm\in\Z^N}F\bigl(S(\bm + \bx)\bigr)
	= \sum_{\substack{\bn\in\Z^N \\|S^{-1}\bn| < \delta}}\wF\bigl(S^{-1}\bn\bigr)e\bigl(\bx^T \bn\bigr).
\end{equation}
The function on the right of (\ref{intro179}) is a~finite linear combination of continuous characters
\begin{equation*}\label{intro183}
\bx \mapsto e\bigl(\bx^T \bn\bigr)
\end{equation*}
defined on the compact group $\R^N /\Z^N$. That is, the function of $\bx$ on the right of (\ref{intro179}) is a~trigonometric polynomial.
If we assume that $\delta \le |A|^{-1}$, then it follows from (\ref{intro108}) in the statement of Lemma~\ref{lemappl1} that there
is at most one nonzero term in the sum on the right of (\ref{intro179}). That is, if we assume that $\delta \le |A|^{-1}$ then (\ref{intro179}) simplifies to
\begin{equation}\label{intro186}
(\det S) \sum_{\bm\in\Z^N}F\bigl(S(\bm + \bx)\bigr) = \wF(\bo)
\end{equation}
at each coset representative $\bx$ in $\R^N /\Z^N$.
\section{Proof of Theorem~\ref{thmintro1}}

In this section we prove Theorem~\ref{thmintro1} by applying the Poisson formula (\ref{intro186}) to the two real entire functions
\begin{equation*}\label{intro279}
\F_{\nu}(\bz; R, \delta) = \psi_N\bigl(f_{\nu}\bigr)(\bz; R, \delta),\quad\text{and}\quad \G_{\nu}(\bz; R, \delta) = \psi_N\bigl(g_{\nu}\bigr)(\bz; R, \delta)
\end{equation*}
which were initially defined in (\ref{intro238}). We recall that $f_{\nu}$ and $g_{\nu}$ are even entire functions of exponential type at most $2 \pi \delta$
defined in (\ref{intro218}) and (\ref{intro222}). The special functions $\F_{\nu}$ and $\G_{\nu}$ satisfy the basic inequality
\begin{equation*}\label{intro280}
\F_{\nu}(\bx; R, \delta) \le \chi_R(\bx) \le \G_{\nu}(\bx; R, \delta)
\end{equation*}
at each point $\bx$ in $\R^N$.

We recall that $2 \nu + 2 = N$. It follows from Lemma~\ref{lemboas3} that $\F_{\nu}$ restricted to $\R^N$ and $\G_{\nu}$ restricted
to $\R^N$ belong to
\begin{equation*}\label{intro281}
L^1 \bigl(\R^N \bigr) \cap L^2 \bigl(\R^N \bigr).
\end{equation*}
It also follows from Lemma~\ref{lemboas3} that their Fourier transforms
\begin{equation*}\label{intro282}
\wsF_{\nu} : \R^N \rightarrow \C,\quad\text{and}\quad \wsG_{\nu} : \R^N \rightarrow \C,
\end{equation*}
are continuous functions supported on the closed ball
\begin{equation*}\label{intro283}
\big\{\bt \in \R^N : |\bt| \le \delta\big\}.
\end{equation*}

Using (\ref{intro249}) and (\ref{intro186}) we find that
\begin{equation}\label{intro288}
\begin{split}
\wsF_{\nu}(\bo; R,\delta) &= (\det S) \sum_{\bm\in\Z^N}\F_{\nu}\bigl(S(\bm + \bx); R,\delta\bigr)\\
	&\le (\det S) \sum_{\bm\in\Z^N} \chi_R\bigl(S(\bm + \bx)\bigr)\\
	&\le (\det S) \sum_{\bm\in\Z^N}\G_{\nu}\bigl(S(\bm + \bx); R,\delta\bigr)\\
	&= \wsG_{\nu}(\bo; R,\delta).
\end{split}
\end{equation}
We select the real parameter $\nu$ so that $2\nu + 2 = N$. Then it follows from (\ref{intro253}) that
\begin{equation}\label{intro290}
 \wsG_{\nu}(\bo; R,\delta) - \wsF_{\nu}(\bo; R,\delta) = \omega_{N-1}u_{\nu}(R,\delta).
\end{equation}
Also, using (\ref{intro249}) again we get
\begin{equation}\label{intro297}
\wsF_{\nu}(\bo; R,\delta) \le \int_{\R^N} \chi_R(\bx)\ d\bx = V_N R^N \le \wsG_{\nu}(\bo; R,\delta),
\end{equation}
where $V_N$ is given by (\ref{intro44}). Now (\ref{intro288}), (\ref{intro290}), and (\ref{intro297}), imply that the inequality
\begin{equation}\label{intro304}
\begin{split}
V_N R^N - \omega_{N-1}u_{\nu}(R,\delta) &\le \wsF_{\nu}(\bo; R,\delta)\\
		&\le (\det S) \sum_{\bm\in\Z^N} \chi_R\bigl(S(\bm + \bx)\bigr)\\
		&\le \wsG_{\nu}(\bo; R,\delta) \le V_N R^N + \omega_{N-1}u_{\nu}(R,\delta)
\end{split}
\end{equation}
holds at each point $\bx$ in $\R^N$. Therefore (\ref{intro304}) can be rewritten as the inequality
\begin{equation}\label{intro311}
\begin{split}
\biggl|(\det S)\sum_{\bm\in\Z^N} \chi_R\bigl(S(\bm + \bx)\bigr) - V_N R^N \biggr| \le \omega_{N-1}u_{\nu}(R,\delta)
\end{split}
\end{equation}
for all points $\bx$ in $\R^N$. Finally, we can express (\ref{intro311}) in terms of the original matrix $A$ and the problem of estimating (\ref{intro68})
by applying the identity (\ref{intro154}). This completes the proof of Theorem~\ref{thmintro1}.

{\small\bibliography{lattice}}

\EditInfo{March 28, 2023}{April 26, 2023}{Camilla Hollanti and Lenny Fukshansky}

\end{document}